\newcounter{mycount}
\newtheorem{theorem}[equation]{Theorem}
\newtheorem{definition}[equation]{Definition}
\newtheorem{remark}[equation]{Remark}
\DeclareMathSymbol{\leqslant}{\mathalpha}{AMSa}{"36}
\DeclareMathSymbol{\geqslant}{\mathalpha}{AMSa}{"3E}
\renewcommand{\le}{\;\leqslant\;}
\renewcommand{\ge}{\;\geqslant\;}
\def\O{\mathrm{O}}
\def\q{\quad}
\def\bx{\mathbf x}
\def\by{\mathbf y}
\def\bcdot{\,\cdot\,}
\def\qtext#1{\q\text{#1}\q}
\title{A binary deletion channel with a fixed number of deletions}
\author{
Benjamin Graham\\
{\small University of Warwick}\\
{\small \tt b.graham@warwick.ac.uk}\\
}
\begin{document}
\maketitle

\begin{abstract}
Suppose a binary string $\bx=x_1\dots x_n$ is being broadcast repeatedly over a faulty communication channel. Each time, the channel delivers a fixed number $m$ of the digits ($m<n$) with the lost digits chosen uniformly at random, and the order of the surviving digits preserved. How large does $m$ have to be to reconstruct the message?
\end{abstract}

\section{Introduction}
A {\em binary deletion channel} is a communication device that accepts a sequence of $n$ binary digits. Each digit is lost in transmission with probability $p$.  The order of the surviving digits is preserved, but the output does not indicate the original location of those digits. The number of digits in the output binary string thus follows the Binomial$(n,1-p)$ distribution.

There are two main questions associated with binary deletion channels; see \cite{Mitzenmacher-Survey} for a survey. First, can deletion channels be used to transmit information efficiently using some encoding scheme? Unlike binary symmetric channels and binary erasure channels, the exact information carrying capacity of the binary deletion channel is unknown. A lower bound on the information carrying capacity of the channel is $(1-p)/9$ \cite{DM}.


The other question concerns the reconstructability of the original message when it is transmitted across the deletion channel a number of times. This question is motivated in part by a biology problem---the task of sequencing DNA strands. Let $\bx\in\{0,1\}^n$ denote the message being transmitted. If $\bx$ is chosen uniformly at random, and if $p$ is sufficiently small, $\bx$ can be identified with high probability by looking at a polynomial number of samples from the deletion channel \cite{BKKM}. The difficulty of the reconstruction-from-samples problem is unknown for large $p$ and when $\bx$ may be chosen in an adversarial fashion.

To study the situation when $p$ is large and $\bx\in\{0,1\}^n$ is not necessarily chosen randomly, we will consider an alternative definition for the binary deletion channel. Rather than varying $p$, we will condition on the number of digits $m$ in the output. This is equivalent to choosing $m$ digits uniformly at random from the input digits; the value of $p$ no longer matters.
Our alternative definition is inspired by the difference between the two formulations $G(n,m)$ and $G(n,p)$ of the Erd\H{o}s-R\'enyi random graph: $G(n,m)$ has a fixed number of edges while the number of edges under $G(n.p)$ has the Binomial$(\binom{n}{2},p)$ distribution.

By fixing the number of deletions, we remove the option of sending $\bx$ through the deletion channel again and again until eventually none of the digits are deleted. It is therefore no longer trivial that $\bx$ can be reconstructed by studying the deletion channel output.

\section{The $(m,n)$-deletion channel}

Let $P(m,\bx)$ denote the probability distribution on $\{0,1\}^m$ generated by picking $1\le i_1<\dots<i_m\le n$ uniformly at random and returning the sequence $\by:=x_{i_1}\dots x_{i_m}$.
\begin{definition}
Let $R(m,n)$ denote the statement
\[
\text{for $\bx\in\{0,1\}^n$: the map $\bx\to P(m,\bx)$ is one-to-one}.
\]
\end{definition}
\noindent If $R(m,n)$ holds then $\bx$ can be determined by sampling repeatedly from $P(m,\bx)$. If not, there is a pair of length-$n$ binary strings that cannot be distinguished over an $(m,n)$-deletion channel, no matter how many times you sample.

\begin{definition}Let $N_m:=\sup \{n:R(m,n)\}$ denote the upper bound on the length of messages an $(m,\bcdot)$-deletion channel can convey.
\end{definition}
\begin{remark}
The first few terms of the sequence $(N_m)$ are
\[
N_1=1, \q N_2=3, \q N_3=6, \q N_4=11, \q N_5=15, \q N_6=29, \q \dots
\]
\end{remark}

\noindent The sequence appears to be growing exponentially. Equivalently, it seems that the elements of $\{0,1\}^n$ can be distinguished using a $(\O(\log n),n)$-deletion channel. This is perhaps unsurprising, given the high-dimensional nature of the $P(m,\bx)$ probability distributions.

\definecolor{digit}{rgb}{0.1,0.6,0.6}
\def\0{{\textcolor{digit}{0}}}
\def\1{{\textcolor{digit}{1}}}
Checking that $R(m,N_m)$ holds for $m\le 6$ was achieved by direct calculation. The difficulty of checking $R(m,n)$ grows very rapidly with $m$ and $n$.
For each $\bx\in\{0,1\}^n$, a $2^m$-dimensional vector representing $P(m,\bx)$ has to be calculated. The set of $2^n$ vectors then has to be searched for duplicates.

To demonstrate that $R(m,N_m+1)$ is false, we must provide a pair of binary sequence of length $N_m+1$ that produce a $P(m,\cdot)$ collision. Let $a_{\textcolor{digit}b}$ denote $a$ copies of $b$, i.e. $2_\0\equiv00$ and $3_\1\equiv111$.
For $m\in\{1,2,\dots,6\}$, examples of pairs of strings of length $N_m+1$ that produce a collision are:
\begin{align*}
   01\equiv1_\0 1_\1                             &\qtext{and}1_\1 1_\0 \equiv10,\\
   0110\equiv1_\0 2_\1 1_\0                           &\qtext{and}1_\1 2_\0 1_\1 \equiv1001,\\
1_\0 2_\1 3_\0 1_\1 &\qtext{and}1_\1 3_\0 2_\1 1_\0,\\
1_\0 2_\1 3_\0 1_\1 2_\0 2_\1 1_\0&\qtext{and}1_\1 3_\0 1_\1 1_\0 2_\1 3_\0 1_\1,\\
1_\0 2_\1 5_\0 3_\1 4_\0 1_\1&\qtext{and}1_\1 4_\0 3_\1 5_\0 2_\1 1_\0,\\
1_\0 2_\1 5_\0 3_\1 4_\0 1_\1 3_\0 3_\1 5_\0 2_\1 1_\0&\qtext{and}1_\1 4_\0 3_\1 5_\0 1_\1 1_\0 2_\1 5_\0 3_\1 4_\0 1_\1.
\end{align*}
We have not managed to find $N_7$ and $N_8$; without some theoretical advance they are computationally intractable. However, we have established the upper bounds $N_7<54$ and $N_8<106$. The bounds follow by checking that
\begin{align*}
P(7,&1_\0 2_\1 5_\0 3_\1 4_\0 1_\1 3_\0 3_\1 5_\0 2_\1 1_\0 2_\1 5_\0 2_\1 1_\0 1_\1 5_\0 3_\1 4_\0 1_\1)=\\
P(7,&1_\1 4_\0 3_\1 5_\0 1_\1 1_\0 2_\1 5_\0 2_\1 1_\0 2_\1 5_\0 3_\1 3_\0 1_\1 4_\0 3_\1 5_\0 2_\1 1_\0)
\end{align*}
and
\begin{align*}
P(8,&1_\0 2_\1 5_\0 3_\1 4_\0 1_\1 3_\0 3_\1 5_\0 2_\1 1_\0 2_\1 5_\0 2_\1 1_\0 1_\1 5_\0 3_\1 4_\0\ \rotatebox{270}{\hspace{-2mm}$\curvearrowright$}
\\
    &\ \rotatebox{90}{\hspace{1mm}$\curvearrowleft$}
1_\1 3_\0 3_\1 5_\0 1_\1 1_\0 2_\1 5_\0 2_\1 1_\0 2_\1 5_\0 3_\1 3_\0 1_\1 4_\0 3_\1 5_\0 2_\1 1_\0 )=\\
P(8,&1_\1 4_\0 3_\1 5_\0 1_\1 1_\0 2_\1 5_\0 2_\1 1_\0 2_\1 5_\0 3_\1 3_\0 1_\1 4_\0 3_\1 5_\0 1_\1 \ \rotatebox{270}{\hspace{-2mm}$\curvearrowright$} \\
    &\ \rotatebox{90}{\hspace{1mm}$\curvearrowleft$} 1_\0 2_\1 5_\0 3_\1 4_\0 1_\1 3_\0 3_\1 5_\0 2_\1 1_\0 2_\1 5_\0 2_\1 1_\0 1_\1 5_\0 3_\1 4_\0 1_\1).
\end{align*}
These binary strings were found by experimentally concatenating long substrings of the strings that form $P(6,30)$-collisions. Extrapolating from a dangerously small amount of data, these bounds appear to be the right order of magnitude.

We also have an upper bound on the whole sequence $(N_m)$. It is growing no more quickly than exponentially.
\begin{theorem}\label{theorem885226}
For some constant $C$, $N_m\le C^m$.
\end{theorem}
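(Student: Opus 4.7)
The plan is a direct pigeonhole argument exploiting the fact that the image of the map $\bx\mapsto P(m,\bx)$ lives in a surprisingly small set. The distribution $P(m,\bx)$ is determined by the $2^m$ nonnegative integer counts
\[
c_\by(\bx):=\#\{1\le i_1<\dots<i_m\le n:x_{i_1}\cdots x_{i_m}=\by\},\q \by\in\{0,1\}^m,
\]
and these counts sum to $\binom{n}{m}$. Consequently the number of \emph{distinct} distributions that can appear as $\bx$ ranges over $\{0,1\}^n$ is at most the number of weak compositions of $\binom{n}{m}$ into $2^m$ parts, namely $\binom{\binom{n}{m}+2^m-1}{2^m-1}$. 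This is a polynomial in $n$ of degree $m(2^m-1)$, and can be bounded very crudely by $(2n)^{m\cdot 2^m}$ for all $n,m\ge 1$.

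Now compare with the $2^n$ input strings. If $(2n)^{m\cdot 2^m}<2^n$ then two distinct $\bx\ne\bx'$ must collide under $P(m,\bcdot)$, so $R(m,n)$ fails and $N_m<n$. Taking logarithms, the inequality becomes $n>m\cdot 2^m\log_2(2n)$, which holds as soon as $n\ge c\,m^2\,2^m$ for an absolute constant $c$. Hence $N_m=O(m^2\,2^m)$, and in particular $N_m\le C^m$ for any fixed $C>2$ and all sufficiently large $m$; enlarging $C$ absorbs the finitely many small values of $m$.

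I do not expect any substantive obstacle: the argument uses only the trivial facts that $P(m,\bx)$ is a probability distribution on $\{0,1\}^m$ with denominator $\binom{n}{m}$, and that $\binom{n}{m}\le n^m$. No combinatorial structure of binary strings is needed. The only place where one could try to sharpen the argument is in counting the possible image vectors more tightly than by weak compositions; but even the crudest bound gives the stated exponential upper bound comfortably. Pinning down the true growth rate of $N_m$, which the data in the introduction suggest is only a little larger than $2^m$, would by contrast require genuine information about which counts $(c_\by)_{\by\in\{0,1\}^m}$ are actually realisable.
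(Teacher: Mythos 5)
Your argument is correct and is essentially the paper's own proof: both pigeonhole the $2^n$ input strings against the small set of possible distributions $P(m,\bx)$, whose probabilities are multiples of $1/\binom{n}{m}$, giving a bound polynomial in $n$ of degree about $m\,2^m$ against the exponential $2^n$. Your only deviation is counting the image by weak compositions of $\binom{n}{m}$ rather than the paper's cruder $\bigl(\binom{n}{m}+1\bigr)^{2^m}$, a minor refinement that incidentally makes explicit the sharper bound $N_m=\O(m^2 2^m)$ already implicit in the paper's estimate.
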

\begin{proof}
If $\bx\in\{0,1\}^n$, the probability distributions $P(m,\bx)$ is characterized by the probability of seeing each of the $2^m$ elements of $\{0,1\}^m$. The probability of seeing any particular element of $\{0,1\}^m$ is a multiple of $1/\binom{n}{m}$. The pigeonhole principle implies that $R(m,n)$ can only hold if
\begin{align}\label{nm}
\left(\binom{n}{m} + 1\right)^{2^m} \ge 2^n.
\end{align}
Set $n=C^m$, take logs and use the inequality $\binom{n}{m}+1 \le 2n^m$. Inequality \eqref{nm} holds only if
\[
2^m(1+ m^2 \log_2 C) \ge C^m.
\]
Clearly $R(m,C^m)$ cannot hold if $C$ is sufficiently large.
\end{proof}
Finding good lower bounds for $(N_m)$ seems much more difficult. We will only prove a linear bound.
\begin{theorem}\label{theorem598139}
$N_m\ge 2m-1$ for $m\ge 3$.
\end{theorem}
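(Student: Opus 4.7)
My plan is to show directly that the map $\bx\mapsto P(m,\bx)$ is injective on $\{0,1\}^n$ for every $n\le 2m-1$, by giving an explicit reconstruction of $\bx$ from its subsequence statistics. Writing $\binom{\bx}{\bz}$ for the number of index sequences $i_1<\cdots<i_{|\bz|}$ with $x_{i_j}=z_j$, the distribution $P(m,\bx)$ is exactly the table $\bigl(\binom{\bx}{\bw}\bigr)_{\bw\in\{0,1\}^m}$.

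The first step is the marginalisation identity
\[
\binom{\bx}{\bz}\binom{n-|\bz|}{m-|\bz|}\;=\;\sum_{\bw\in\{0,1\}^m}\binom{\bx}{\bw}\binom{\bw}{\bz}\qquad(|\bz|\le m),
\]
obtained by double-counting pairs (length-$m$ positioned subsequence of $\bx$, length-$|\bz|$ sub-selection of value $\bz$). It shows that $P(m,\bx)$ determines $\binom{\bx}{\bz}$ for every $\bz$ with $|\bz|\le m$. In particular the composition $(|\bx|_0,|\bx|_1)=\bigl(\binom{\bx}{0},\binom{\bx}{1}\bigr)$ is recovered, and since $n\le 2m-1$ at least one of these counts is $\le m-1$. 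Relabelling $0\leftrightarrow 1$ if necessary (harmless, as the map is equivariant under that relabelling), I may assume $\bx$ has exactly $k\le m-1$ zeros, and I write its run decomposition
\[
\bx\;=\;1^{g_0}\,0\,1^{g_1}\,0\cdots 0\,1^{g_k},\qquad g_0,\ldots,g_k\ge 0,
\]
so that $\bx$ is fully encoded by the tuple $(k,g_0,\ldots,g_k)$.

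The crux is the observation
\[
\binom{\bx}{0^i 1 0^{k-i}}\;=\;g_i\qquad(i\in\{0,1,\ldots,k\}).
\]
Indeed, any embedding of $0^i 1 0^{k-i}$ into $\bx$ must use all $k$ zeros of $\bx$ (exactly as many as are available), and the lone $1$ must sit with precisely $i$ zeros to its left, which pins it to the $i$-th gap of $1$'s, of length $g_i$. The pattern has length $k+1\le m$, so by the marginalisation step its count is determined by $P(m,\bx)$; this recovers every $g_i$ and hence $\bx$. I do not foresee a real obstacle: the hypothesis $n\le 2m-1$ enters in precisely one place, to ensure $k+1\le m$, and the $0\leftrightarrow 1$ swap is only a bookkeeping matter.
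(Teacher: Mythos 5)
Your proof is correct and follows essentially the same route as the paper's: recover the lower-order subsequence statistics from $P(m,\bx)$ (your marginalisation identity is just the paper's ``second deletion channel'' made explicit), use the $0\leftrightarrow1$ symmetry to assume the minority digit occurs $k\le m-1$ times, and read off the run lengths from the counts of the $k+1$ patterns containing a single majority digit. The only differences are cosmetic: you swap the roles of $0$ and $1$ relative to the paper and state the reconstruction for all $n\le 2m-1$ rather than just $n=2m-1$.
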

\begin{proof}[Proof of Theorem \ref{theorem598139}]
We will show that $\bx\in\{0,1\}^{2m-1}$ can be identified using $P(m,\bx)$. Note that for $j\le m$, we can deduce $P(j,\bx)$ from $P(m,\bx)$ using a second deletion channel which discards $m-j$ of its input digits.

Let $k$ denote the number of ones in $\bx$: $k$ is simply $2m-1$ times the probability of $1$ under $P(1,\bx)$. By symmetry we can assume $k<m$.
The string $\bx$ can be written as $k+1$ runs of zeros separated by the $k$ ones. Let $i^{(0)},i^{(1)},\dots,i^{(k)}\ge 0$ denote the length of the runs of zeros, i.e.
\[
\bx=  i^{(0)}_\0 1_\1 i^{(1)}_\0 1_\1 \dots 1_\1 i^{(k)}_\0.
\]
The $i^{(j)}$ are determined by the probabilities under $P(k+1,\bx)$ of the $k+1$ strings containing a single zero and $k$ ones:
\[
\text{ the probability of } j_\1 1_\0 (k-j)_\0 \text{ under } P(k+1,\bx) \text{ is } \frac{i^{(j)}}{\binom{2m-1}{k+1}}.\qedhere
\]
\end{proof}

\section{Conclusions}
We have introduced an alternative model for deletion channels; it has a non-trivial reconstructability problem. We have explored the space of `hardest-to-transmit' binary sequences to find $N_m$ for $m$ small. We have also found bounds on $N_m$ for general $m$.

We conjecture that $(N_m)$ grows exponentially.



\end{document}